\newtheorem{thm}{Theorem}
\newtheorem{lem}{Lemma}
\newtheorem{con}{Conjecture}
\theoremstyle{definition}
\newtheorem{defn}{Definition}
\theoremstyle{remark}
\newtheorem*{rem}{Remark}
\newcommand{\Fp}{{\mathbb F_p}}
\DeclareMathOperator{\sign}{sign}
\begin{document}

\title{Partitions of nonzero elements of a finite field into pairs}

\author{R.N.~Karasev}

\thanks{The research of R.N.~Karasev is supported by the Dynasty Foundation,
the President's of Russian Federation grant MK-113.2010.1, the Russian Foundation for Basic Research grants 10-01-00096 and 10-01-00139, the Federal Program ``Scientific and scientific-pedagogical staff of innovative Russia'' 2009--2013}

\email{r\_n\_karasev@mail.ru}
\address{
Roman Karasev, Dept. of Mathematics, Moscow Institute of Physics
and Technology, Institutskiy per. 9, Dolgoprudny, Russia 141700}

\author{F.V.~Petrov}

\email{fedyapetrov@gmail.com}

\address{
Fedor Petrov, Saint-Petersburg Dept. of the Steklov Mathematical Institute, nab. Fontanki 27, Saint-Petersburg, Russia 191023}

\thanks{The research of F.V.~Petrov is supported by the Russian Foundation for Basic Research grant 08-01-00379}

\keywords{Finite fields, Combinatorial Nullstellensatz, the Borsuk--Ulam theorem}
\subjclass[2000]{05B40, 05E15, 57S17}

\begin{abstract}
In this paper we prove that the nonzero elements of a finite field with odd characteristic can be partitioned into pairs with prescribed difference (maybe, with some alternatives) in each pair. The algebraic and topological approaches to such problems are considered. We also give some generalizations of these results to packing translates in a finite or infinite field, and give a short proof of a particular case of the Eliahou--Kervaire--Plaigne theorem about sum-sets.
\end{abstract}

\maketitle

\section{Introduction}

In this paper we prove several theorems on combinatorics of finite fields. Denote $\Fp$ the finite field of size $p$, where $p$ is a prime. As it was shown in~\cite{pm2009}, if $p$ is an odd prime, the partitioning $\Fp^*$ into pairs with strictly prescribed differences is possible. Let us denote $[n] = \{1,2,\ldots, n\}$ and give the formal statement.

\begin{thm}
\label{p-part}
Let $p$ be an odd prime, $m=\dfrac{p-1}{2}$. Suppose we are given $m$ elements $d_1, d_2,\ldots, d_m\in \Fp^*$. Then there exist pairwise distinct $x_1,\ldots,x_m,y_1,\ldots,y_m\in \Fp^*$ such that for every $i=1,\ldots, m$ we have
$$
y_i-x_i  = d_i.
$$
\end{thm}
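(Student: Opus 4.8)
The plan is to use the Combinatorial Nullstellensatz. First I would reformulate the problem: since $2m = p-1 = |\Fp^*|$, it suffices to find $x_1,\dots,x_m \in \Fp$ for which the $2m$ values $x_1,\dots,x_m,x_1+d_1,\dots,x_m+d_m$ are pairwise distinct and all nonzero. They then automatically exhaust $\Fp^*$, and one simply sets $y_i = x_i + d_i$. The point of this reduction is that the existence of such a configuration can be detected by the nonvanishing of a single polynomial in the $x_i$.

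Accordingly, I would introduce
\[
F(x_1,\dots,x_m) = \prod_{i} x_i \prod_i (x_i+d_i)\prod_{i<j}(x_i-x_j)\prod_{i<j}\bigl((x_i+d_i)-(x_j+d_j)\bigr)\prod_{i\ne j}\bigl(x_i-(x_j+d_j)\bigr),
\]
whose factors encode, respectively, $x_i\ne0$, $x_i+d_i\ne0$, the distinctness of the $x_i$, that of the $x_i+d_i$, and the distinctness of each $x_i$ from each $x_j+d_j$ (the $i=j$ case of the last product is the nonzero constant $-d_i$). A point of $\Fp^m$ at which $F\ne0$ is exactly a valid choice. Counting factors gives $\deg F = 2m + 2\binom m2 + m(m-1) = 2m^2$, so the natural monomial to target is $\prod_i x_i^{2m}$: it has the full degree $2m\cdot m = 2m^2$, and each exponent $2m$ is strictly less than $|\Fp| = p = 2m+1$, so the Nullstellensatz will apply with every $S_i = \Fp$ as soon as the coefficient of this monomial is shown to be nonzero.

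To extract that coefficient I would pass to the top homogeneous part of $F$, replacing each factor by its leading form and using $\prod_{i\ne j}(x_i-x_j) = (-1)^{\binom m2}\prod_{i<j}(x_i-x_j)^2$, which yields
\[
(-1)^{\binom m2}\prod_i x_i^{2}\cdot \prod_{i<j}(x_i-x_j)^{4}.
\]
Thus the coefficient of $\prod_i x_i^{2m}$ in $F$ equals $(-1)^{\binom m2}$ times the coefficient of $\prod_i x_i^{2(m-1)}$ in $\Delta^4$, where $\Delta = \prod_{i<j}(x_i-x_j)$ is the Vandermonde determinant.

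The crux — and the step I expect to be the main obstacle — is evaluating this last coefficient and checking that it is nonzero in $\Fp$. I would compute it via Dyson's constant term identity: since $\prod_{i\ne j}(1-x_i/x_j)^2 = \Delta^4/\bigl(\prod_i x_i\bigr)^{2(m-1)}$, the identity gives constant term $(2m)!/(2!)^m$, whence the coefficient in question is $\pm(2m)!/2^m$. Now $2m = p-1$, so Wilson's theorem gives $(2m)! = (p-1)! \equiv -1 \pmod p$, while $2^m$ is invertible because $p$ is odd; hence the coefficient is a nonzero element of $\Fp$. The Combinatorial Nullstellensatz then produces the desired point $(x_1,\dots,x_m)$, completing the proof. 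A topological route through the Borsuk--Ulam theorem is also conceivable, but the algebraic argument above appears to be the most direct.
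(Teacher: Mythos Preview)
Your argument is correct and coincides with the paper's second algebraic proof (Combinatorial Nullstellensatz combined with the Dyson constant-term identity), up to a cosmetic repackaging: the paper encodes the constraints $x_i\ne 0$ and $x_i\ne -d_i$ by taking $A_i=\Fp^*\setminus\{-d_i\}$ and working with the smaller polynomial $\prod_{i<j}(x_i-x_j)(x_i+d_i-x_j)(x_i-x_j-d_j)(x_i+d_i-x_j-d_j)$ and exponent $2m-2$, whereas you absorb those constraints into extra factors $x_i(x_i+d_i)$, take $S_i=\Fp$, and target exponent $2m$. Either way one lands on the coefficient of $\prod x_i^{2(m-1)}$ in $\prod_{i<j}(x_i-x_j)^4$, evaluated via Dyson as $(2m)!/2^m\not\equiv 0\pmod p$.
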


In this paper we present new proofs of this theorem, using algebraic and topological techniques. We also prove some generalizations of Theorem~\ref{p-part}, for example the following result on packing translates in a field.

\begin{defn}
Let $\mathbb F$ be some field. For $X\subseteq \mathbb F$ and $t\in \mathbb F$ denote
$$
X+t = \{x+t: x\in X\},
$$
and for $X\subseteq \mathbb F$ and $Y\subseteq F$ denote
$$
X\pm Y = \{x\pm y : x\in X,\ y\in Y\}.
$$
\end{defn}

\begin{thm}
\label{f-tr-pack}
Suppose $\mathbb F$ is a field, $m$ and $d$ are positive integers such that $\frac{(md)!}{(d!)^m}\ne 0$ in $\mathbb F$. Let $X_1, \ldots, X_m$ and $T_1,\ldots, T_m$ be subsets of $\mathbb F$ such that
$$
\forall i<j\ |X_i-X_j| \le 2d,\quad\forall i\ |T_i|\ge (m-1)d+1.
$$
Then there exists a system of representatives $t_i\in T_i$ such that the sets
$$
X_1+t_1,\ldots, X_m+t_m
$$
are pairwise disjoint.
\end{thm}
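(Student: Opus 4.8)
The plan is to encode the disjointness requirement as the nonvanishing of a single polynomial and then apply the Combinatorial Nullstellensatz. First I would reformulate the goal: the translates $X_i+t_i$ and $X_j+t_j$ meet exactly when $x+t_i=x'+t_j$ for some $x\in X_i$, $x'\in X_j$, that is, when $t_i-t_j\in X_j-X_i$. Writing $D_{ij}=X_j-X_i$ for $i<j$, we have $|D_{ij}|=|X_i-X_j|\le 2d$, and the sets $X_1+t_1,\dots,X_m+t_m$ are pairwise disjoint precisely when $t_i-t_j\notin D_{ij}$ for all $i<j$.

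Next I would introduce the polynomial
\[
P(t_1,\dots,t_m)=\prod_{i<j}\Bigl[(t_i-t_j)^{\,2d-|D_{ij}|}\prod_{a\in D_{ij}}(t_i-t_j-a)\Bigr],
\]
where the extra factors $(t_i-t_j)$ are inserted only to pad the exponent of each block up to $2d$. Any point at which $P$ does not vanish satisfies $t_i-t_j\ne a$ for every $a\in D_{ij}$ (the padding imposes at most the harmless extra condition $t_i\ne t_j$), hence yields the desired pairwise disjoint translates. By construction each block has degree exactly $2d$, so the total degree of $P$ is $\sum_{i<j}2d=2d\binom{m}{2}=m(m-1)d$, and its top-degree homogeneous part, obtained by replacing each factor by its leading term $t_i-t_j$, is $\prod_{i<j}(t_i-t_j)^{2d}$.

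Now I would apply the Combinatorial Nullstellensatz with the exponents $k_i=(m-1)d$ for every variable. Since $\sum_i k_i=m(m-1)d=\deg P$, the monomial $\prod_i t_i^{(m-1)d}$ is of maximal total degree, so its coefficient in $P$ equals its coefficient in the top-degree part $\prod_{i<j}(t_i-t_j)^{2d}$; and the hypothesis $|T_i|\ge (m-1)d+1$ gives $|T_i|>k_i$. Thus it suffices to show this coefficient is nonzero in $\mathbb F$, and the Nullstellensatz then produces $t_i\in T_i$ with $P(t_1,\dots,t_m)\ne0$, finishing the proof.

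The crux, and the main obstacle, is therefore the purely algebraic evaluation of the coefficient of $\prod_i t_i^{(m-1)d}$ in $\prod_{i<j}(t_i-t_j)^{2d}$. Here I would invoke Dyson's constant-term identity in the balanced case: with all parameters equal to $d$, the constant term of $\prod_{i\ne j}(1-x_i/x_j)^d$ equals $\frac{(md)!}{(d!)^m}$. Using the identity $(1-x_i/x_j)(1-x_j/x_i)=-(x_i-x_j)^2/(x_ix_j)$ to rewrite
\[
\prod_{i\ne j}(1-x_i/x_j)^d=(-1)^{d\binom{m}{2}}\,\frac{\prod_{i<j}(x_i-x_j)^{2d}}{\prod_i x_i^{(m-1)d}},
\]
the constant term on the right is exactly $(-1)^{d\binom{m}{2}}$ times the coefficient I need. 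Hence that coefficient equals $\pm\frac{(md)!}{(d!)^m}$, which is nonzero in $\mathbb F$ by assumption. If one prefers to avoid citing Dyson, the equal-parameter case admits an elementary self-contained proof by Good's recursion, and this coefficient computation is the only step I expect to require genuine care.
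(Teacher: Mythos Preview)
Your argument is correct and follows essentially the same route as the paper: encode disjointness by a polynomial whose top homogeneous part is $\prod_{i<j}(t_i-t_j)^{2d}$, identify the relevant coefficient via the equal-parameter Dyson identity as $(-1)^{d\binom{m}{2}}\frac{(md)!}{(d!)^m}$, and finish with the Combinatorial Nullstellensatz. The only cosmetic difference is in the padding step: the paper enlarges each difference set $X_j-X_i$ to a superset $M_{ij}$ of size exactly $2d$ and uses $\prod_{t\in M_{ij}}(x_i-x_j-t)$, whereas you multiply in powers of $(t_i-t_j)$; both devices serve only to make every block have degree exactly $2d$ and leave the leading form unchanged.
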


In particular, if $\mathbb F = \Fp$, $T_i=\Fp$, and $md<p$, then we can translate the sets $X_1, \ldots, X_m\subset\Fp$ so that they become pairwise disjoint, provided $|X_j-X_i|\le 2d$ for all $i<j$. Theorem~\ref{p-part} is a particular case of Theorem~\ref{f-tr-pack} with $\mathcal F = \mathcal F_p, d=2, X_i=\{0, d_i\}$, and $T_i=\mathbb F\setminus \{0, -d_i\}$.

Let us return to partitions into pairs of other finite Abelian groups. For finite fields of size $p^k$ (we treat them as $\Fp$-vector spaces) the differences cannot be prescribed strictly. The simple counterexample is when the difference is the vector $d=(1,0,0,\ldots,0)$ for all pairs, then every line $\{v+td\}_{t\in \Fp}$ can have at most $\lfloor\frac{p}{2}\rfloor$ pairs and the partition is impossible.

The same obstruction arises if we try to generalize Theorem~\ref{p-part} for the rings $\mathbb Z/(n)$ for odd composite $n$. Here we may require all $d_i$ to be $d=n/p$, where $p$ is prime divisor of $n$. It is clear that in every set $\{v+dt\}_{t\in \Fp}$ one element would be not paired. Further conjectures for partitions of $\mathbb Z/(n)$ are discussed in Section~\ref{zn-pairs-sec}.

Now return to the positive results. In the case of the finite field of size $p^k$ (treated as $\Fp$-vector space here), it is sufficient to give some alternatives for each pair, which is done in the following theorem.

\begin{thm}
\label{pk-part}
Let $p$ be an odd prime, and let $V$ be the $\Fp$-vector space of dimension $k$. Denote $V^* = V\setminus\{0\}$ and put $m=|V^*|/2=\dfrac{p^k-1}{2}$. Suppose we are given $m$ linear bases of the vector space $V$
$$
(v_{11}, \ldots, v_{1k}), (v_{21}, \ldots, v_{2k}),\ldots, (v_{m1}, \ldots, v_{mk})
$$
Then there exist pairwise distinct $x_1,\ldots, x_m, y_1,\ldots, y_m \in V^*$ and
a map $g : [m]\to [k]$ such that for every $i=1,\ldots, m$ we have
$$
y_i-x_i = v_{ig(i)}.
$$
\end{thm}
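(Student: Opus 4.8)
The plan is to attack Theorem~\ref{pk-part} by the polynomial method, in the spirit of the proofs of Theorems~\ref{p-part} and~\ref{f-tr-pack}, treating the essential new feature — the freedom in the map $g$ — as the device that rescues an otherwise vanishing coefficient. First I would fix a field structure on $V$, identifying it with $\mathbb F_q$, $q=p^k$, so that $V^*=\mathbb F_q^*$ is exactly the set of $2m$ roots of $T^{q-1}-1=T^{2m}-1$. Note that here $m=(q-1)/2$, which is not the $m$ of Theorem~\ref{p-part}. The target is then a system $x_1,\dots,x_m$ of distinct nonzero elements, together with $g$, such that the $m$ values $y_i=x_i+v_{ig(i)}$ are precisely the complementary set $V^*\setminus\{x_1,\dots,x_m\}$; equivalently, all $2m$ elements $x_i,y_i$ are distinct and nonzero.

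Second, I would make explicit why the choice is unavoidable, since this dictates the shape of the argument. For a fixed assignment of differences $d_i=v_{ig(i)}$, the natural Alon polynomial
$$
P=\prod_{i<j}(x_i-x_j)(y_i-y_j)(x_i-y_j)(y_i-x_j)\cdot\prod_i x_iy_i,\qquad y_i=x_i+d_i,
$$
has total degree $2m^2$, and a nonzero coefficient of the balanced monomial $\prod_i x_i^{q-1}$ would, by the Combinatorial Nullstellensatz over $\mathbb F_q$ with each $x_i$ ranging over all of $\mathbb F_q$, force a fixed-difference partition. The top homogeneous part of $P$ is $\prod_{i<j}(x_i-x_j)^4\prod_i x_i^2$, independent of the $d_i$, so its balanced coefficient is a single element of $\mathbb F_p$ depending only on $m$; the counterexample with all $d_i$ equal then forces this element to vanish for $k\ge 2$. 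Consequently no single-direction polynomial can work, and one must exploit the $k$ alternatives. The plan here is to keep $y_1,\dots,y_m$ as free variables subject only to $y_i-x_i\in\{v_{i1},\dots,v_{ik}\}$ and to extract a coefficient whose value is no longer difference-independent: because each tuple $(v_{i1},\dots,v_{ik})$ is a \emph{basis}, the contributions of the $k$ directions should assemble into a Vandermonde/determinant-type expression that does not vanish, even though each single direction alone contributes $0$.

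The step I expect to be the main obstacle is exactly this coefficient computation: identifying the right monomial — necessarily unbalanced, so that the difference-dependence survives — and proving that the associated coefficient is a nonzero multiple of the product of the basis determinants $\det(v_{i1},\dots,v_{ik})$. Should this algebraic route prove too rigid, the alternative is the topological one advertised in the keywords: set up the configuration-space/test-map scheme with the free $\mathbb Z/p$-action coming from the additive structure of $V$, encode the map $g$ as a section of an associated bundle, and apply a Borsuk--Ulam/Dold nonexistence theorem for equivariant maps into a lower-dimensional free $\mathbb Z/p$-complex; there the main difficulty instead migrates to constructing the equivariant test map and controlling the connectivity of the configuration space.
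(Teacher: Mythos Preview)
Your proposal is a plan rather than a proof, and both of its branches have genuine gaps.

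On the polynomial branch: you correctly diagnose that the balanced coefficient of $\prod_{i<j}(x_i-x_j)^4\prod_i x_i^2$ vanishes in $\mathbb F_p$ for $k\ge 2$, and you propose to rescue this by letting the $y_i$ vary over the $k$ alternatives and extracting an ``unbalanced'' monomial whose coefficient involves the basis determinants. But you neither specify the monomial nor indicate why such a coefficient should be nonzero; you explicitly flag this as the main obstacle and leave it open. The paper gives no algebraic proof of Theorem~\ref{pk-part} at all, and there is no evidence that this route can be completed.

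On the topological branch, which is in fact the paper's method, your sketch misidentifies the group and the theorem. You speak of ``the free $\mathbb Z/p$-action coming from the additive structure of $V$'' and of applying a Dold-type result. The additive group of $V$ is $(\mathbb Z/p)^k$, not $\mathbb Z/p$, and one must use the whole group: if you restrict to a $\mathbb Z/p$-subgroup $H$, the target complex $L$ (proper subsets of $V$) acquires $H$-fixed simplices (any union of fewer than $p^{k-1}$ cosets of $H$), so neither Dold nor any fixed-point-free hypothesis is available. With the full $(\mathbb Z/p)^k$-action, $L$ is fixed-point free but \emph{not} free for $k\ge 2$, so Dold's theorem still does not apply; one needs the stronger Volovikov-type statement (the paper's Lemma~\ref{bu-pk}) for elementary abelian $p$-groups acting only fixed-point freely. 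The paper stresses exactly this distinction between the $k=1$ and $k\ge 2$ cases. Finally, the mechanism by which the basis hypothesis enters the topological proof is not the determinant you anticipate: the edge set of each $S_i$ uses all $k$ vectors $v_{i1},\dots,v_{ik}$, and $S_i$ is connected precisely because these vectors span $V$; this is what gives the join $K=V*S_1*\dots*S_m$ the required $(p^k-2)$-connectivity.
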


The paper is organized as follows. We discuss different proofs of Theorem~\ref{p-part} in Section~\ref{p-part-sec}. In Section~\ref{dyson-sec} we give a new algebraic proof of a lemma on discriminant-like polynomials, known as the Dyson conjecture~\cite{dyson1962}, which is used in the proofs of Theorems~\ref{p-part} and \ref{f-tr-pack}. In Section~\ref{f-tr-pack-sec} we prove Theorem~\ref{f-tr-pack}. The topological proofs for Theorems~\ref{p-part} and \ref{pk-part} are given in Section~\ref{top-sec}. In Sections~\ref{zn-pairs-sec} and \ref{cau-dav-sec} we discuss some conjectures and similar results for cyclic groups.

Generally, the algebraic methods in combinatorics are known to be very useful and powerful, see~\cite{frwi1981,kaka1993,alon1999} for examples of their application. The topological methods in combinatorics and discrete geometry also proved to be very useful, starting from the lower bounds for the chromatic number of the Kneser graphs in~\cite{lov1978}, other examples of topological methods can be found in~\cite{mat2003,ziv2004}.

The authors thank Noga~Alon, Doron Zeilberger, and Michal Adamaszek for useful discussion and comments. We  thank D\"om\"ot\"or P\'alv\"olgyi for drawing our attention to the known proofs of Theorem~\ref{p-part}; and we thank the unknown referee for numerous remarks, corrections, and references.

\section{The algebraic proofs of Theorem~\ref{p-part}}
\label{p-part-sec}

First, we sketch a simplified version of the proof in~\cite{pm2009}. It is in the spirit of the Combinatorial Nullstellensatz~\cite{alon1999}, see also~\cite{alon2000} for a similar proof of a theorem on distinct pairwise sums in $\Fp$.

For a polynomial $f(x_1,x_2,\dots,x_m)\in \Fp [x_1,x_2,\dots,x_m]$ denote
$$
\int f=\sum_{(c_1,\dots,c_m)\in \Fp^m} f(c_1,c_2,\dots,c_m).
$$

Recall the following lemma, often used to study Diophantine equations over finite fields.

\begin{lem}
\label{average-p}
If $\deg f<m(p-1)$, then $\int f=0$.
\end{lem}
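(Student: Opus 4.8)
The plan is to exploit the linearity of $f\mapsto\int f$ to reduce the statement to the case of a single monomial, and then to use the multiplicative structure of $\Fp^*$ to evaluate the resulting power sums. Since both summation over $\Fp^m$ and the formation of $\deg f$ respect linear combinations (the degree of a sum being at most the maximum of the degrees), it suffices to prove that $\int f=0$ whenever $f=x_1^{a_1}x_2^{a_2}\cdots x_m^{a_m}$ is a monomial with $a_1+a_2+\cdots+a_m<m(p-1)$.

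For such a monomial the sum factorizes as a product over the coordinates,
$$
\int f=\prod_{i=1}^m\left(\sum_{c\in\Fp}c^{a_i}\right),
$$
so everything comes down to understanding the one-variable power sum $S(a)=\sum_{c\in\Fp}c^{a}$. First I would record the elementary evaluation of $S(a)$ in $\Fp$: for $a=0$ we get $S(0)=\sum_{c\in\Fp}1=p=0$; for $a>0$ the term $c=0$ contributes nothing, and summing over the cyclic group $\Fp^*$ (say writing $c=g^{j}$ for a generator $g$ and $j=0,\dots,p-2$) gives a geometric series whose value is $-1$ when $(p-1)\mid a$ and $0$ otherwise, because $g^{a}=1$ in the former case and $g^{a}\neq1$ with $(g^{a})^{p-1}=1$ in the latter. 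Thus $S(a)=0$ unless $a$ is a positive multiple of $p-1$.

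It then remains to observe that the degree hypothesis forces at least one vanishing factor. Indeed, if every exponent satisfied $a_i\geq p-1$, we would have $a_1+\cdots+a_m\geq m(p-1)$, contradicting $\deg f<m(p-1)$. Hence some index $i$ has $a_i<p-1$, which means either $a_i=0$ or $0<a_i<p-1$; in both cases $S(a_i)=0$ by the computation above, so the whole product, and therefore $\int f$, vanishes.

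I do not anticipate a genuine obstacle here: the only step requiring care is the evaluation of the power sum $S(a)$, and in particular keeping track of the special roles of $a=0$ (where the factor is $p=0$ rather than a nontrivial geometric sum) and of the $c=0$ summand when $a>0$. Once those bookkeeping points are handled correctly, the degree count is immediate from the pigeonhole argument and the conclusion follows.
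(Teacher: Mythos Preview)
Your proof is correct and follows essentially the same approach as the paper: reduce by linearity to a single monomial, locate a variable whose exponent is below $p-1$ by pigeonhole, and use the one-variable power-sum identity $\sum_{c\in\Fp}c^{a}=0$ for $0\le a\le p-2$. The only cosmetic difference is that the paper computes the one-variable sum via the substitution $c\mapsto gc$ (yielding $S=g^{a}S$ with $g^{a}\neq 1$), whereas you sum a geometric series over a generator; both arguments are standard and equivalent.
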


\begin{proof}
This is well-known for $m=1$ (recall the proof: it suffices to consider $f(x)=x^k$, $1\leq k\leq p-2$. There exists $g\in \Fp^*$, which is not a root of polynomial $x^k-1$, for such $g$ we have $S:=\sum_{x\in \Fp} f(x)=\sum_{x\in \Fp} f(gx)=g^k S$, hence $S=0$). In the general case, note that each monomial of degree
less then $m(p-1)$ has degree less then $p-1$ in some specific variable. If we sum up (``integrate'') in this variable first, we get zero by the one-dimensional case.
\end{proof}

We interpret our problem as follows. We need to find elements $c_1,c_2,\dots,c_m$ from $\Fp$ such that elements $c_i$ and $c_i+d_i$ are all distinct and nonzero. That is, it suffices to prove that the following polynomial $f$ takes non-zero values
\begin{multline*}
f(x_1,\dots,x_m)=\\=x_1\dots x_m(x_1+d_1)\dots(x_m+d_m)
\prod_{i<j} (x_i-x_j)\cdot (x_i+d_i-x_j)\cdot (x_i-x_j-d_j)\cdot (x_i+d_i-x_j-d_j).
\end{multline*}

It suffices to prove that $\int f\ne 0$. Note that this polynomial has degree $2m+4m(m-1)/2=2m^2=m(p-1)$, and its homogeneous component of the maximal degree does not depend on $d_1$, $d_2$, $\dots$, $d_m$. By Lemma~\ref{average-p} it means that $\int f=\int g$, where $g$ is any polynomial with the
same component of maximal degree. Put
$$
g(x_1,\dots,x_m)= \prod_{j=1}^m (x_i^2+x_i) \cdot \prod_{i<j} (x_i-x_j)^2\cdot ((x_i-x_j)^2-1).
$$
Note that $g(c_1,c_2,\dots,c_m)\ne 0$ iff $c_1$, $c_2$, $\dots$, $c_m$
are $1$, $3$, $\dots$, $p-2$ (odd numbers) in some order. It is clear that all these $m!$ non-zero
values of $g$ are equal. It follows that $\int g\ne 0$, hence $\int f\ne 0$ and the proof is complete.

Let us discuss another approach to this theorem via the Combinatorial Nullstellensatz, similar to the technique of~\cite{alon2000}, this approach to Theorem~\ref{p-part} was also discovered independently in~\cite{ks2010}.

Recall the Combinatorial Nullstellensatz.

\begin{thm}
\label{cn-prod}
Suppose a polynomial $f(x_1,x_2,\dots,x_n)$ over field $\mathbb{F}$ has degree at most $c_1+c_2+\dots+c_n$, where $c_i$ are non-negative integers, and denote by $C$ the coefficient at $x_1^{c_1}\dots x_n^{c_n}$ in $f$ (maybe, $C=0$). Let $A_1$, $A_2$, \ldots, $A_n$ be arbitrary subsets of $\mathbb{F}$ such that $|A_i|=c_i+1$ for any $i$. Denote also $\varphi_i(x)=\prod_{\alpha\in A_i}(x-\alpha)$. Then
\begin{equation}
\label{cn-eq}
C=\sum_{\alpha_i\in A_i} \frac{f(\alpha_1,\dots,\alpha_n)}
{\varphi_1'(\alpha_1)\dots \varphi_n'(\alpha_n)}
\end{equation}
In particular, if $C\ne 0$, then there exists a system of representatives $\alpha_i\in A_i$ such that $f(\alpha_1,\alpha_2,\dots,\alpha_n)\ne 0$.
\end{thm}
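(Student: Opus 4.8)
The plan is to exploit the bilinearity of the claimed identity \eqref{cn-eq} and reduce everything to a one-variable interpolation fact. With the sets $A_i$ and the exponents $c_i$ fixed, both sides of \eqref{cn-eq} are $\mathbb{F}$-linear in $f$: the left side is the coefficient of a fixed monomial, and the right side is a fixed linear combination of the values $f(\alpha_1,\dots,\alpha_n)$. Hence it suffices to verify the formula when $f$ is a single monomial $x_1^{a_1}\cdots x_n^{a_n}$ with $a_1+\cdots+a_n\le c_1+\cdots+c_n$.

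For such a monomial the right-hand side of \eqref{cn-eq} factors over the variables,
$$
\sum_{\alpha_i\in A_i}\frac{\alpha_1^{a_1}\cdots\alpha_n^{a_n}}{\varphi_1'(\alpha_1)\cdots\varphi_n'(\alpha_n)}=\prod_{i=1}^n s_i(a_i),\qquad s_i(a):=\sum_{\alpha\in A_i}\frac{\alpha^a}{\varphi_i'(\alpha)},
$$
so the whole problem rests on understanding the one-variable sums $s_i(a)$. The key claim is that $s_i(a)=0$ for $0\le a<c_i$ and $s_i(c_i)=1$. I would prove this by Lagrange interpolation: since $\varphi_i'(\alpha)=\prod_{\beta\in A_i,\ \beta\ne\alpha}(\alpha-\beta)$, any polynomial $h$ of degree at most $c_i$ coincides with its interpolant $h(x)=\sum_{\alpha\in A_i}\frac{h(\alpha)}{\varphi_i'(\alpha)}\prod_{\beta\ne\alpha}(x-\beta)$. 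Each product $\prod_{\beta\ne\alpha}(x-\beta)$ is monic of degree $c_i$, so comparing the coefficients of $x^{c_i}$ shows that $s_i(a)$ is exactly the coefficient of $x^{c_i}$ in $h(x)=x^a$; this equals $0$ when $a<c_i$ and $1$ when $a=c_i$.

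With the one-variable claim in hand the conclusion is immediate. The degree restriction $a_1+\cdots+a_n\le c_1+\cdots+c_n$ forces one of two situations. Either $a_i=c_i$ for every $i$, in which case each factor $s_i(a_i)=1$ and the right side equals $1$, matching $C=1$; or some index satisfies $a_i<c_i$, which must happen whenever $(a_i)\ne(c_i)$, since $a_i\ge c_i$ for all $i$ together with $\sum a_i\le\sum c_i$ would force equality. In the latter case that factor $s_i(a_i)=0$ annihilates the product, so the right side is $0$, again matching $C=0$. Thus \eqref{cn-eq} holds on every admissible monomial and, by linearity, for all $f$. The final assertion about a system of representatives follows at once: if $C\ne0$, then the sum on the right of \eqref{cn-eq} is nonzero, so at least one summand, hence at least one value $f(\alpha_1,\dots,\alpha_n)$, is nonzero.

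The main obstacle I expect is formulating and proving the one-variable lemma cleanly; once the divided-difference identity for $s_i(a)$ is established, the multivariate statement is a formal consequence of linearity together with the elementary observation about the exponent constraint. A minor point to handle carefully is that interpolation only controls $s_i(a)$ for $a\le c_i$ (for $a>c_i$ the sum need not vanish), but this never causes trouble, because in the vanishing case we always locate an index with $a_i<c_i$.
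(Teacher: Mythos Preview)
Your proof is correct and follows essentially the same route as the paper: reduce by linearity to monomials, use one-variable Lagrange interpolation to compute the sums $s_i(a)$, and observe that any admissible monomial other than $x_1^{c_1}\cdots x_n^{c_n}$ must have some $a_i<c_i$, which kills the sum. Your explicit factorization $\prod_i s_i(a_i)$ is slightly cleaner than the paper's ``sum over the distinguished variable first'' phrasing, but the content is identical.
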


\begin{proof} For $n=1$, (\ref{cn-eq}) just follows from the
Lagrange interpolation formula, which gives the representation
$$
f(x)=\sum_{\alpha\in A_1} f(\alpha)\frac{\varphi_1(x)}{\varphi_1'(\alpha)(x-\alpha)}.
$$
By induction on $n$ (\ref{cn-eq}) also holds for any monomial of degree at most $c_i$ in each $x_i$. Next, by linearity of both parts of (\ref{cn-eq}) it suffices to prove (\ref{cn-eq}) for $h:=f-Cx_1^{c_1}\dots x_n^{c_n}$. Each  monomial of $h$ has degree less then $c_i$ for at least one index $i$. If we fix all values $\alpha_j$ for $j\ne i$, then summation over $\alpha_i\in A_i$ gives 0, as follows again from the one-dimensional case.
\end{proof}

\begin{rem} The formula (\ref{cn-eq}) after multiplying by the common denominator holds also for commutative rings with unity. Indeed, for fixed $c_i$'s both parts are some polynomials with integral coefficients in $x_i$'s, elements of $A_i$'s and coefficients of $f$. Since (\ref{cn-eq}) holds over $\mathbb{R}$, these polynomials should be identically equal.
\end{rem}

Let us use this theorem for the polynomial
$$
f = \prod_{i<j} (x_i-x_j)\cdot (x_i+d_i-x_j)\cdot(x_i-x_j-d_j)\cdot(x_i+d_i-x_j-d_j),
$$
the numbers $c_i=2m-2=p-3$, and $A_i=\Fp^*\setminus\{-d_i\}$. The only thing left to check is that the coefficient of $\prod x_i^{2m-2}$ in $f$ does not vanish. It equals the coefficient of $\prod x_i^{2m-2}$ in $\prod_{i<j} (x_i-x_j)^4$. The Dyson conjecture (\cite{dyson1962}, or \cite[Theorem~3.2]{alon2000}, or section \ref{dyson-sec} here), proved in~\cite{wilson1962,gunson1962}, states (in its particular case) that this coefficient equals $\dfrac{(2m)!}{2^m}$, which is clearly different from $0$ in $\Fp$, so we are done. 

The approach in our first proof of Theorem~\ref{p-part} avoids the Dyson conjecture (and the Combinatorial Nullstellensatz), but actually it provides an alternative proof of it. This proof is given in Section~\ref{dyson-sec}.

\section{A new proof of the Dyson conjecture}
\label{dyson-sec}

The Combinatorial Nullstellensatz is often used for getting information on values of polynomials from the knowledge of their coefficients. But (\ref{cn-eq}) allows to use it in other direction, as we show by deriving the Dyson conjecture.

\begin{thm}
\label{dyson}
Let $a_i$, $1\leq i\leq n$ be positive integers. Denote by $C$ the free term in
$$
\prod_{1\le i\ne j\le n} (1-x_i/x_j)^{a_i}.
$$
In other words, with $a=\sum a_i$, $C$ equals the coefficient of
$\prod x_i^{a-a_i}$ in
\begin{equation}\label{2}
f(x_1,\dots,x_n):=\prod_{1\leq i<j\leq n} (-1)^{a_j}(x_j-x_i)^{a_i+a_j}.
\end{equation}
Then
$$
C=\frac{a!}{a_1!\dots a_n!}.
$$
\end{thm}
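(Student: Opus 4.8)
The plan is to read off $C$ as a top-degree coefficient of $f$ and then apply the Combinatorial Nullstellensatz (Theorem~\ref{cn-prod}) ``in reverse'', choosing the evaluation sets $A_i$ to be intervals of consecutive integers so that almost all terms of the resulting sum vanish. First I would record the degree bookkeeping. Writing $c_i = a - a_i$, the polynomial $f$ of \eqref{2} has total degree $\sum_{i<j}(a_i+a_j) = (n-1)a = \sum_{i=1}^n c_i$, since each $a_k$ occurs in exactly $n-1$ of the factors. Hence $\prod_i x_i^{c_i}$ is a monomial of maximal degree, its coefficient is exactly $C$, and Theorem~\ref{cn-prod} applies with these $c_i$.

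Next I would carry out the computation over $\mathbb Q$ (the quantity $C$ is an integer, so this loses nothing, and the Remark after Theorem~\ref{cn-prod} transfers the identity to any ring). Take $A_i = \{0,1,\dots,a-a_i\}$, so that $|A_i| = c_i + 1$ and $\varphi_i(x) = \prod_{k=0}^{a-a_i}(x-k)$; for $\alpha_i = k$ the weight $\varphi_i'(\alpha_i)$ is then the signed product of factorials $(-1)^{a-a_i-k}\,k!\,(a-a_i-k)!$. Plugging into \eqref{cn-eq} expresses $C$ as a sum over all systems of representatives $(\alpha_1,\dots,\alpha_n)\in A_1\times\cdots\times A_n$ of the ratios $f(\alpha)/\prod_i\varphi_i'(\alpha_i)$. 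The crucial simplification is that every factor $(x_j-x_i)$ divides $f$, so $f(\alpha)=0$ as soon as two coordinates $\alpha_i,\alpha_j$ coincide; only systems of \emph{distinct} integer representatives survive, which cuts down the range of summation enormously.

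The main obstacle is then to evaluate the surviving sum in closed form and recognize it as $a!/(a_1!\cdots a_n!)$. I would attack this by induction on $n$ (equivalently on $a$): fix the representatives of $x_2,\dots,x_n$, isolate the part of $f$ depending on $x_1$, and perform the inner summation over $\alpha_1\in A_1$ first, reducing it by a one-dimensional Lagrange/summation identity to an instance of the same problem with $n-1$ variables and decreased parameters. This parallels the structure of the first proof of Theorem~\ref{p-part}, where only the ``permutation'' systems of representatives of the auxiliary polynomial contribute and each contributes equally, the number of them producing the combinatorial factor. I expect the delicate point to be organizing the signs together with the factorial weights $\varphi_i'(\alpha_i)$ so that the inner sum telescopes cleanly rather than leaving residual cross-terms; managing this bookkeeping, rather than any conceptual leap, is where the real work lies.
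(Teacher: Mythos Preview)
Your setup coincides exactly with the paper's: the same exponents $c_i=a-a_i$, the same evaluation sets $A_i=\{0,1,\dots,a-a_i\}$, and the same use of Theorem~\ref{cn-prod} in the ``reverse'' direction via \eqref{cn-eq}. The divergence comes at the step you flag as ``the main obstacle''. Your reduction to systems of \emph{distinct} representatives is correct but far too weak: even after discarding collisions, the number of surviving tuples in $A_1\times\cdots\times A_n$ is enormous, and your inductive plan for the inner summation over $\alpha_1$ runs into a degree obstruction. Indeed $\deg_{x_1} f=\sum_{j>1}(a_1+a_j)=(n-2)a_1+a$, which exceeds $c_1=a-a_1$ whenever $n\ge2$, so the one-variable Lagrange identity does not return a single coefficient and there is no clean reduction to an $(n-1)$-variable instance of the same shape.

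The paper's key idea, which your proposal is missing, is precisely the one you allude to in the parallel with the first proof of Theorem~\ref{p-part}: modify $f$ by lower-order terms (this does not affect $C$) so that the modified polynomial has a \emph{single} nonzero value on $\prod_i A_i$. Concretely, replace each power $(x_j-x_i)^{a_i+a_j}$ by the falling-factorial product
\[
C_{i,j}(x)=\prod_{s=-a_i+1}^{a_j}(x_j-x_i+s).
\]
Then $C_{i,j}(\alpha)\ne0$ forces the integer intervals $\Delta_i=[\alpha_i,\alpha_i+a_i-1]\subset[0,a-1]$ to be disjoint \emph{and} forbids $\Delta_i$ from immediately following $\Delta_j$; over all $i<j$ this pins down the unique tuple $\alpha_i=a_1+\cdots+a_{i-1}$. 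The sum \eqref{cn-eq} then collapses to one explicitly computable term, which yields $C=a!/(a_1!\cdots a_n!)$ after cancellation. Without this modification the bookkeeping you anticipate does not telescope.
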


\begin{proof} In notations of Theorem~\ref{cn-prod}, we have $c_i=a-a_i$. The idea is to add terms of lower degree to $f$, it does not change the coefficient $C$, but may significantly change the RHS of (\ref{cn-eq}). Also, we are free to choose $A_i$. Let's try to change $f$ to $\tilde{f}$ and choose $A_i$ so that $\tilde{f}$ takes unique non-zero value on $\prod A_i$. Put $A_i=\{0,1,\dots,a-a_i\}$. So,
if $x_i\in A_i$, then the segment $\Delta_i:=[x_i,x_i+a_i-1]$ lies inside $[0,a-1]$. Here and in the rest of the text we denote by $[\ldots]$ segments of integers.

Now we change $f$. Replace $(x_j-x_i)^{a_i+a_j}$ in formula (\ref{2}) for $f$ by
$$
C_{i,j}(x_1,\dots,x_n):=\prod_{s=-a_i+1}^{a_j} (x_j-x_i+s).
$$
Non-vanishing of $C_{i,j}$ means that the segments $\Delta_i$, $\Delta_j$ are disjoint
and $\Delta_i$ may not be the segment following $\Delta_j$
(that is, $\min \Delta_i\ne \max \Delta_j+1$).
All this together may happen only if $\Delta_1$, $\Delta_2$, $\dots$, $\Delta_n$
are consecutive segments $[0,a_1-1]$, $[a_1,a_1+a_2-1]$, $\dots$, $[a-a_n,a-1]$,
i.e. if $x_i=\beta_i:=a_1+\dots+a_{i-1}$.

So, $C$ equals
$$C=\frac{\prod_{1\leq i<j\leq n} (-1)^{a_j} C_{i,j}(\beta_1, \beta_2, \dots,\beta_n)}
{\prod \varphi_i'(\beta_i)},$$
where $\varphi_i(x)=\prod_{s=0}^{a-a_i} (x-s)$.
It may be calculated easily by noticing that
$$
\varphi_i'(\beta_i) = (-1)^{a_{i+1}+\dots+a_n} (a_1+\dots+a_{i-1})!(a_{i+1}+\dots+a_n)!
$$
and
$$
C_{i,j}(\beta_1,\dots,\beta_n)=\frac{(a_i+\dots+a_j)!}{(a_{i+1}+\dots+a_{j-1})!}.
$$

Many factorials and powers of $-1$ are canceled, and we get the
desired formula for $C$.

\end{proof}

\section{The proof of Theorem~\ref{f-tr-pack}}
\label{f-tr-pack-sec}

Choose some sets $M_{ij}\supseteq X_j - X_i$ of size exactly $2d$ and consider the polynomial
$$
f(x_1, \ldots, x_m) = \prod_{1\le i<j\le m} \prod_{t\in M_{ij}} (x_i - x_j - t).
$$
If $f$ attains a nonzero value on $T_1\times\dots\times T_m$ then the proof is complete. Note that
$$
\deg f = m(m-1)d,
$$
and its coefficient at $(x_1\dots x_m)^{(m-1)d}$ is the same as the coefficient in
$$
\tilde f(x_1, \ldots, x_m) = \prod_{1\le i<j\le m} (x_i - x_j)^{2d},
$$
which is
$$
(-1)^{d\binom{m}{2}} \frac{(md)!}{(d!)^m}
$$
by the Dyson conjecture. Now we again apply Theorem~\ref{cn-prod}.

\begin{rem}
The statement of Theorem \ref{f-tr-pack} holds also if we replace the inequality $|X_i-X_j| \le 2d$ by the following condition: the polynomial
$$
\prod_{i<j} (x_i-x_j)^{|X_i-X_j|}
$$
has at least one non-zero monomial with degree at most $|T_i|-1$ in $x_i$ for every $i$. Then we can multiply by an appropriate monomial to have nonzero coefficient at $\prod_{i=1}^m x_i^{|T_i|-1}$ and apply the Combinatorial Nullstellensatz. 

In particular, the above reasoning works if $\mathbb F = T_i = \Fp$ and $|X_i-X_j|\leq a_i+a_j$ for some non-negative integer $a_i$'s such that $\sum a_i\leq p-1$ (by Dyson's conjecture again). In particular, $a_i=\left\lceil \frac{|X_i|^2}2\right\rceil $ satisfy $|X_i-X_j|\leq a_i+a_j$, so the statement holds provided that $\sum \left\lceil \frac{|X_i|^2}2\right\rceil <p$.
\end{rem}

\section{The topological proofs}
\label{top-sec}

Now we go to the topological proofs, as usual they use a certain generalization of the Borsuk--Ulam theorem.

The general examples of using the Borsuk--Ulam theorem in combinatorics can be found in~\cite{mat2003,ziv2004}. In particular, the topological proof of Theorem~\ref{p-part} uses the ideas in~\cite{vuziv1993}, where the lower bound on the number of Tverberg partitions is proved. The proof of Theorem~\ref{pk-part} uses the technique from~\cite{hell2007}, where the number of Tverberg partitions was estimated for the case when the number of parts is a prime power.

Let us state the generalized Borsuk--Ulam theorem that we need (see~\cite{vol2000} for example).

\begin{lem}
\label{bu-pk}
Let $G=(\Fp)^k$ be the additive group. Let $X$ and $Y$ be $G$-CW-complexes
with fixed point free action of $G$. Let $X$ be $n$-connected and
$Y$ be $n$-dimensional. Then there cannot exist a continuous
map $f: X\to Y$, commuting with the action of $G$.
\end{lem}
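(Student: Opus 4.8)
The plan is to argue by contradiction using the Borel construction together with the mod-$p$ cohomology of $BG$, exploiting the fact that $G=(\Fp)^k$ is an elementary abelian $p$-group, for which $H^*(BG;\Fp)$ is nonzero in every degree. Suppose a $G$-equivariant map $f\colon X\to Y$ exists. Since both actions are free, the homotopy quotients $EG\times_G X$ and $EG\times_G Y$ are homotopy equivalent to the honest orbit spaces $X/G$ and $Y/G$, and $f$ descends to $\bar f\colon X/G\to Y/G$. Each orbit space carries a classifying map to $BG$, induced by the $G$-map $X\to EG$ (which exists because $EG$ is terminal among free $G$-spaces up to $G$-homotopy), and these fit into a commuting triangle with $\bar f$; writing $c_X\colon X/G\to BG$ and $c_Y\colon Y/G\to BG$, we have $c_X=c_Y\circ\bar f$.

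First I would analyze $c_X$ via the Serre spectral sequence of the fibration $X\to EG\times_G X\to BG$ with $\Fp$ coefficients. Because $X$ is $n$-connected, $\tilde H^t(X;\Fp)=0$ for $t\le n$, so $E_2^{s,t}=H^s(BG;H^t(X))$ vanishes for $1\le t\le n$ and equals $H^s(BG;\Fp)$ for $t=0$. A short check of the differentials shows that no differential enters or leaves the slot $E_2^{n+1,0}$, so it survives to $E_\infty$; hence the edge homomorphism $c_X^*\colon H^{n+1}(BG;\Fp)\to H^{n+1}(X/G;\Fp)$ is injective.

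Next I would use the dimension hypothesis on $Y$. Since the action on $Y$ is free and $Y$ is $n$-dimensional, the orbit space $Y/G$ is an $n$-dimensional CW-complex, so $H^{n+1}(Y/G;\Fp)=0$ and $c_Y^*$ annihilates everything in degree $n+1$. To finish, pick a nonzero class $u\in H^{n+1}(BG;\Fp)$, which exists because $H^*(BG;\Fp)\cong H^*(B\Fp;\Fp)^{\otimes k}$ is nonzero in every degree. On one hand $c_X^*(u)\ne 0$ by injectivity; on the other hand $c_X^*(u)=\bar f^*\,c_Y^*(u)=\bar f^*(0)=0$, a contradiction.

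The main obstacle, and the point where the hypothesis that $G$ is a $p$-group is indispensable, is the nonvanishing of $H^{n+1}(BG;\Fp)$: for a general finite group the mod-$p$ cohomology could die in high degrees, but for elementary abelian $G$ the Künneth formula guarantees a nonzero class in every dimension. The only other work is the routine spectral-sequence bookkeeping establishing injectivity of $c_X^*$ in degree $n+1$; everything else (replacing homotopy quotients by orbit spaces, building the classifying maps, and the naturality of the edge homomorphism) is formal.
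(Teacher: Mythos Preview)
The paper does not prove this lemma; it is quoted with a reference to Volovikov~\cite{vol2000}. So there is no in-paper argument to compare against, and the question is whether your argument stands on its own.

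It does not, because you have silently strengthened the hypothesis. You begin with ``Since both actions are free\ldots'', but the lemma only assumes the actions are \emph{fixed point free} (that is, $X^G=Y^G=\emptyset$); nontrivial isotropy is permitted. The paper is explicit that this distinction matters: in the proof of Theorem~\ref{pk-part} it says ``The action by shifts on $L$ is \emph{only} fixed point free (not \emph{free}) this time. In this case Lemma~\ref{bu-pk} is essentially needed, while for Theorem~\ref{p-part} we only need its simple particular case for free actions (the Dold theorem\ldots).'' What you have written is, in effect, a correct proof of that easier Dold-type statement, which suffices for Theorem~\ref{p-part} but not for Theorem~\ref{pk-part}.

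Concretely, once the $G$-action on $Y$ is not free the Borel construction $EG\times_G Y$ is no longer homotopy equivalent to $Y/G$, and your key step ``$H^{n+1}(Y/G;\Fp)=0$, hence $c_Y^*$ annihilates everything in degree $n+1$'' has no analogue. Indeed the restriction $H^{n+1}(BG;\Fp)\to H^{n+1}_G(Y;\Fp)$ can be injective even when $Y$ is $n$-dimensional with $Y^G=\emptyset$: take $G=(\Fp)^2$, let $H_1,H_2$ be two distinct index-$p$ subgroups, and set $Y=G/H_1\sqcup G/H_2$ (so $n=0$); then $H^1_G(Y;\Fp)\cong H^1(BH_1)\oplus H^1(BH_2)$ and the restriction from $H^1(BG)$ is injective because $H_1+H_2=G$. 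So the $Y$-side vanishing genuinely fails outside the free case and cannot be repaired by merely passing to Borel cohomology.

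Your spectral-sequence analysis on the $X$ side is fine and goes through verbatim for $EG\times_G X\to BG$, free action or not: the map $H^{n+1}(BG;\Fp)\to H^{n+1}_G(X;\Fp)$ is injective whenever $X$ is $n$-connected. The substantive content of Volovikov's theorem lies on the $Y$ side, where from $\dim Y\le n$ and $Y^G=\emptyset$ (with $G$ an elementary abelian $p$-group) one must produce a suitable index-type obstruction in equivariant cohomology; that argument is more delicate than a dimension bound on an orbit space, and it is exactly what the citation to~\cite{vol2000} is covering.
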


We are going to use this lemma in the case, when $X$ and $Y$ are simplicial complexes, the action of $G$ and the map $f$ are simplicial. In this case the spaces are indeed $G$-CW-complexes. Such a point of view allows to state everything purely in combinatorial terms, without appealing to topological spaces.

Let us prove Theorem~\ref{p-part}. Consider the following simplicial complex ($*$ means join)
$$
K = V*S_1*\dots*S_m,
$$
where $V$ is a discrete set equal to $\Fp$, $S_i$ is a one-dimensional subcomplex of $V*V$, with edges of type $(x, x)$ and $(x, x+d_i)$. Clearly $K$ is a join of a discrete set and $m$ circles (equivalently, a join of $V$ with $p-2$-dimensional sphere), and therefore $p-2$-connected and $p-1$-dimensional.

Consider the complex $L$, having the same vertices as $V$, and all subsets of $\le p-1$ elements as simplices. The map $f:K\to L$ is defined naturally on vertices. Note the important thing: this map would be simplicial if the required permutation does not exist. Indeed, if a simplex in $K$ is mapped to a non-simplex in $L$, then it is mapped onto the entire set $V$. Hence, it is $p-1$-dimensional of the form
$$
v*[a_1, b_1]*\dots*[a_m, b_m]\in V*S_1*\dots*S_m,
$$
and the set $\{v, a_1, b_1, \ldots, a_m, b_m\}$ equals $V$. Shifting by $-v$ we have $v=0$, and from bijectivity $b_i=a_i+d_i$ (the case $a_i=b_i$ is impossible). Hence we may assume the contrary: $f$ is simplicial.

Note that the map $f$ is $\Fp$-equivariant. Here we identify $V=\Fp$ and consider the action of $V$ on itself by shifts and on $S_i$ by shifting both coordinates by the same value. Thus arises a free action on $K$ (by shifting all the coordinates by the same value), and a free action on $L$ by shifts. Besides, $f$ maps a $p-2$-connected complex to a $p-2$-dimensional complex. Hence such a map cannot exist by Lemma~\ref{bu-pk}, and the required permutation must exist. Theorem~\ref{p-part} is proved.

Now let us prove Theorem~\ref{pk-part}. Consider $V=(\Fp)^k$ and action of $V$ on itself by shifts. Let
$$
K = V*S_1*\dots*S_m,
$$
where the complex $S_i$ has vertices $V*V$ and edges $(x,x)$, $(x, x+ v_{ij})$ for all possible $x\in V, j=1,\ldots,k$. It is essential that $S_i$ is connected iff $\{v_{ij}\}_{j=1}^k$ linearly span $V$, which is required in the theorem. In this case $K$ is also $p^k-2$-connected and $p^k-1$-dimensional. The complex $L$ on vertices $V$ is defined the same way, its simplices are all subsets of size at most $p^k-1$, hence it is $p^k-2$-dimensional. 

Consider the action of $V$ on itself by shifts, and on $S_i$ by shifting both coordinates by the same value. Thus arises a free action on $K$ (by shifting all the coordinates by the same value). The action by shifts on $L$ is \emph{only} fixed point free (not \emph{free}) this time. In this case Lemma~\ref{bu-pk} is essentially needed, while for Theorem~\ref{p-part} we only need its simple particular case for free actions (the Dold theorem, see~\cite{mat2003} for example). The rest of the proof is the same, applying Lemma~\ref{bu-pk} for the group of shifts $V=(\Fp)^k$.

\section{Conjectures on partitions of $\mathbb Z/(n)$}
\label{zn-pairs-sec}

We conjecture the following generalizations of Theorem~\ref{p-part} for rings $\mathbb Z/(n)$. We denote by $\mathbb Z/(n)^*$ the invertible (coprime with $n$) elements of $\mathbb Z/(n)$.

\begin{con}
\label{odd-part}
Let $n=2m+1$ be a positive integer. Suppose we are given $m$ elements $d_1, d_2,\ldots, d_m\in \mathbb Z/(n)^*$. Then there exists a partition of $\mathbb Z/(n)\setminus\{0\}$ into pairs with differences $d_1, d_2,\ldots, d_m$.
\end{con}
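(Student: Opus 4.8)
The natural plan is to imitate the topological proof of Theorem~\ref{p-part}, replacing $\Fp$ by the group $G=\mathbb Z/(n)$. Because each $d_i$ is a unit it has order $n$ in $G$, so the subcomplex $S_i\subset V*V$ with edges $(x,x)$ and $(x,x+d_i)$ is a single $2n$-cycle, i.e.\ a circle; this is exactly where the hypothesis $d_i\in\mathbb Z/(n)^*$ enters, since a non-unit $d_i$ would make $S_i$ disconnected and destroy the connectivity of $K$. Thus, with $m=(n-1)/2$,
$$
K=V*S_1*\dots*S_m\simeq V*(S^1)^{*m}=V*S^{2m-1}
$$
is $(n-2)$-connected and $(n-1)$-dimensional, with a free diagonal shift action of $G$. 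Let $L$ be the $(n-2)$-skeleton of the simplex on the vertex set $V$, with the shift action. As before, if the desired partition does not exist, the natural vertex map extends to a $G$-equivariant simplicial map $f\colon K\to L$, because the only way a simplex of $K$ can miss $L$ is by covering all of $V$, which would already exhibit the partition. So it suffices to prove that no such $G$-map exists.

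For $n$ prime this is precisely Lemma~\ref{bu-pk} (indeed already Dold's theorem), and the argument runs verbatim. The obstruction to the composite case is, I expect, the crux of the whole problem, and it is twofold. First, $G=\mathbb Z/(n)$ is no longer elementary abelian, so the Volovikov-type statement of Lemma~\ref{bu-pk}, whose conclusion needs only a fixed-point-free action, is unavailable: its mechanism is genuinely special to elementary abelian $p$-groups (and tori) and fails for $\mathbb Z/(n)$ as soon as $n$ is not prime. Second, and relatedly, the action of $G$ on $L$ is fixed-point-free but \emph{not} free: a shift by an element $a$ of order $\ell$ with $1<\ell<n$ fixes the barycenter of the simplex $\langle a\rangle\subset V$ (a proper subgroup has size $\ell\le n-1$, hence is a simplex of $L$). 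Consequently the clean free-action tool, Dold's theorem, also does not apply, and one is forced to analyze an action whose isotropy groups are exactly the subgroups of $G$.

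My proposal is therefore to replace the connectivity-versus-dimension count by a finer equivariant invariant that detects this isotropy structure — for instance the Fadell--Husseini index, or a Bredon-cohomological obstruction, computed relative to the family of proper subgroups of $G$. Concretely, for each prime $p\mid n$ let $H_p\le G$ be the subgroup of order $p$. Since $G$ acts freely on $K$ we have $K^{H_p}=\varnothing$, whereas $L^{H_p}$ is homeomorphic to the corresponding skeleton complex on the quotient $V/H_p\cong\mathbb Z/(n/p)$. The plan is to extract, for each $p$, a constraint from the induced map of Borel constructions $EG\times_G K\to EG\times_G L$ by localizing at $p$ and applying Smith theory to the pair $(K,L)$ over $H_p$, and then to assemble these prime-local constraints into a single global contradiction. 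The hard part will be this last step: the prime-local data for distinct $p\mid n$ live in different pieces of $H^*(BG)$, and combining them into one nonexistence statement is exactly where a genuinely new idea is needed — which is why the case $n$ prime (Theorem~\ref{p-part}) and the case $(\Fp)^k$ (Theorem~\ref{pk-part}) are tractable while the general cyclic case is open.

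It is worth recording why the algebraic route of Section~\ref{p-part-sec} stalls as well, since this suggests the difficulty is intrinsic rather than an artifact of the topological packaging. Over $\mathbb Z/(n)$ the polynomial $f$ of that section has top coefficient equal, up to a unit, to the Dyson number $(2m)!/2^m=(n-1)!/2^m$; by the generalized Wilson congruence $(n-1)!\equiv 0\pmod n$ for every composite $n>4$, and since $n$ is odd this forces $(n-1)!/2^m\equiv 0\pmod n$. Hence the relevant coefficient \emph{vanishes} and the Combinatorial Nullstellensatz (Theorem~\ref{cn-prod}) gives no information; the integration lemma behind the first proof fails over $\mathbb Z/(n)$ for the same arithmetic reason. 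A promising alternative that would sidestep both obstructions is a reduction through the Chinese Remainder Theorem combined with existence results for prescribed-difference pairings in the prime-power factors $\mathbb Z/(p^{e})$; there the obstacle is that the condition ``nonzero in $\mathbb Z/(n)$'' does not respect the product decomposition, so gluing factorwise solutions is once more the essential difficulty.
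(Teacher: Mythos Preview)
This statement is a \emph{conjecture} in the paper, not a theorem; the paper offers no proof, and explicitly says in Section~\ref{zn-pairs-sec} that ``in the general case the authors cannot prove that this coefficient is nonzero.'' Your proposal is likewise not a proof but a research outline, and you yourself say that ``a genuinely new idea is needed'' and that ``the general cyclic case is open.'' So there is no proof to compare; what can be compared are the two \emph{approaches} to the open problem.

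Your outline is topological: extend the machinery of Section~\ref{top-sec} to cyclic $G=\mathbb Z/(n)$. You correctly identify the obstruction --- Lemma~\ref{bu-pk} is special to elementary abelian $p$-groups, and the shift action on $L$ has nontrivial isotropy when $n$ is composite --- and your proposed remedies (Fadell--Husseini index, Bredon cohomology, Smith-theoretic localization at each prime $p\mid n$) are sensible, though as you admit the gluing of the prime-local constraints is the whole difficulty.

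The paper's own attack, however, is algebraic and not the one you analyze. Rather than working with the polynomial of Section~\ref{p-part-sec} over the ring $\mathbb Z/(n)$ --- where, as you observe, the ring is not a field and the Dyson coefficient $(2m)!/2^m$ vanishes modulo $n$ --- the paper embeds $\mathbb Z/(n)$ \emph{multiplicatively} into $\mathbb C$ as the $n$-th roots of unity and applies the Combinatorial Nullstellensatz over $\mathbb C$. This sidesteps both of your objections: the ground ring is a field, and the relevant coefficient is the permanent-type sum~(\ref{permanent2}) in the roots $w_i$, a complex number rather than a residue mod $n$. The conjecture then reduces to the nonvanishing of that sum; the paper can verify this only when $n$ is prime, via the substitution $w\mapsto 1$ and the lemma on the minimal polynomial of $w$. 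So your diagnosis of why the \emph{direct} polynomial route stalls is accurate, but the paper's actual algebraic attempt is more refined and stalls for a different, more delicate reason.
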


The following conjecture was proposed (and verified for $n=24$) by Michal Adamaszek (private communication).

\begin{con}
\label{even-part}
Let $n=2m$ be a positive integer. Suppose we are given $m$ elements $d_1, d_2,\ldots, d_m\in \mathbb Z/(n)^*$. Then there exists a partition of $\mathbb Z/(n)$ into pairs with differences $d_1, d_2,\ldots, d_m$.
\end{con}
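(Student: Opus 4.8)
The natural plan is to imitate the topological proof of Theorem~\ref{p-part}, now with the translation group $G=\mathbb Z/(n)$ acting on $V=\mathbb Z/(n)$, exploiting that in the even case there is no ``leftover'' element. Accordingly I would drop the discrete factor $V$ that appeared for Theorem~\ref{p-part} and set
$$
K=S_1*\dots*S_m,
$$
where $S_i\subseteq V*V$ is the graph with edges $(x,x)$ and $(x,x+d_i)$. Since each $d_i$ is a unit it generates $\mathbb Z/(n)$, so $S_i$ is a single $2n$-cycle, i.e.\ a circle; hence $K$ is a join of $m$ circles, a sphere $S^{2m-1}=S^{n-1}$, which is $(n-2)$-connected and $(n-1)$-dimensional.

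Next I would take the target $L$ to be the complex on the vertex set $V$ whose simplices are all subsets of size at most $n-1$; this is $\partial\Delta^{n-1}\cong S^{n-2}$, of dimension $n-2$. Defining $f\colon K\to L$ on vertices by forgetting which copy of $V$ a vertex lies in, the same dichotomy as before applies: $f$ fails to be simplicial only when a top simplex is carried onto all of $V$, and because $n=2m$ is even this happens exactly when the $m$ pairs $\{a_i,a_i+d_i\}$ exhaust $\mathbb Z/(n)$ with nothing left over, i.e.\ exactly when the desired partition exists. So, assuming no partition exists, $f$ is a $G$-equivariant simplicial map from an $(n-2)$-connected complex to an $(n-2)$-dimensional one. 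The actions behave well: $G$ acts freely on $K$, while on $L$ it has no global fixed point, since the only translation-invariant subsets of $V$ are $\emptyset$ and $V\notin L$. With the connectivity/dimension bound equal to $n-2$, this is precisely the configuration in which a Borsuk--Ulam-type statement would forbid $f$ and finish the proof.

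The main obstacle is exactly this last step. Lemma~\ref{bu-pk} is available only for elementary abelian groups $(\mathbb F_p)^k$, whereas $G=\mathbb Z/(n)$ is elementary abelian only when $n$ is prime, which for even $n=2m$ leaves only the trivial case $n=2$. Worse, when $n$ is not a prime power the fixed-point-free Borsuk--Ulam theorem fails in general, so no index-theoretic argument of this kind can rule out $f$; and when $n=p^a$ with $a\ge 2$ the group is a $p$-group but not elementary abelian, so Lemma~\ref{bu-pk} still does not apply verbatim. I expect overcoming this to be the crux of the difficulty.

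To make progress I would split by the factorization of $n$. For a prime power $n=p^a$ I would try to replace Lemma~\ref{bu-pk} by a finer equivariant cohomology argument for the cyclic $p$-group $\mathbb Z/(p^a)$, in the spirit of~\cite{hell2007}; note that restricting to the subgroup $\mathbb Z/(p)$ does not help, because the $\mathbb Z/(p)$-action on $L$ acquires global fixed points, so one must work with the whole group and its periodic cohomology. For $n$ with several prime factors, where topology provably cannot close the gap, I would instead exploit that every $d_i$ is odd, so every pair contains exactly one even and one odd residue; this recasts the problem as a prescribed-gap perfect matching between the even and odd residues, inviting a Chinese-remainder reduction or a direct design-theoretic construction. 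An algebraic attack via Theorem~\ref{cn-prod} seems hopeless on its own, since the relevant Dyson coefficient $\frac{n!}{2^{\,n/2}}$ vanishes modulo composite $n$ and the Combinatorial Nullstellensatz has no zero-divisor-proof analogue.
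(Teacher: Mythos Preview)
This statement is a \emph{conjecture} in the paper, not a theorem; the paper contains no proof of it. You are right to stop where you do: your topological outline is the natural adaptation of the proof of Theorem~\ref{p-part}, and the obstacle you name---that Lemma~\ref{bu-pk} is only available for elementary abelian $p$-groups, while $\mathbb Z/(2m)$ is almost never of that form---is genuine and is exactly why the argument cannot be closed. Your diagnosis that the direct Combinatorial Nullstellensatz over $\mathbb Z/(n)$ is blocked by the vanishing of the Dyson coefficient is likewise correct.

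One approach the paper does try, and which you do not mention, is the roots-of-unity embedding (Section~\ref{zn-pairs-sec}): map $\mathbb Z/(n)$ multiplicatively into $\mathbb C$ and apply Theorem~\ref{cn-prod} over $\mathbb C$, reducing the question to the nonvanishing of an explicit permanent-like expression in the $n$-th roots of unity (formula~(\ref{permanent})). This sidesteps the zero-divisor issue you flag, since the Nullstellensatz is applied over a field, but the authors can verify the needed nonvanishing only when $n$ is prime, recovering Theorem~\ref{p-part} yet again; for composite $n$ the expression remains intractable. So both the topological and the algebraic routes are currently stuck, and the statement stands as an open conjecture.
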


Let us discuss the possible approach to these conjectures using the Combinatorial Nullstellensatz, based on ideas from~\cite{dkss2001}. Let us embed $\mathbb Z/(n)$ into $\mathbb C$ as the $n$-th roots of unity (denote these roots by $C_n$). Denote 
$$
w = \cos \frac{2\pi}{n} + i\sin \frac{2\pi}{n}.
$$
The numbers $d_i$ are transformed into $w_i = w^{d_i}$. Consider the polynomial
$$
F(x_1,\dots,x_n)=\prod_{1\leq i< j\leq m} (x_i-x_j)(w_ix_i-x_j)(x_i-w_jx_j)(w_ix_i-w_jx_j).
$$
We have to prove that it takes a nonzero value on $C_n\times\dots\times C_n$. This would be guaranteed by a nonzero coefficient at $\prod x_i^{2m-2}$. Consider 
$$
G = F \prod_i (x_i-w_ix_i)=F \prod_i (1-w_i)\cdot \prod_i x_i.
$$
The polynomial $G$ is the Vandermonde polynomial of the following $2m$ variables
$$
(z_1,\ldots,z_{2m}) = (x_1, x_2, \ldots, x_m, w_1x_1, \ldots, w_mx_m).
$$
We are interested in its coefficient at $\prod x_i^{2m-1}$. The Vandermonde polynomial is the determinant
$$
\sum_\sigma \sign \sigma\  z_{1}^{\sigma_1} z_{2}^{\sigma_2}\dots z_{2m}^{\sigma_{2m}},
$$
where summation is over all bijections $\sigma : [1,2m]\to [0,2m-1]$. A summand is proportional to $\prod x_i^{2m-1}$ iff $\sigma_i+\sigma_{m+i}=2m-1$ for all $1\leq i\leq m$. The parity of such a permutation is determined by the number of $i\in [1,m]$ such that $\sigma_i\geq m$. Therefore the coefficient in $G$ at $\prod x_i^{2m-1}$ equals
\begin{equation}
\label{permanent}
\sum_\pi (w_1^{\pi_1}-w_1^{2m-1-\pi_1})\cdot \dots \cdot (w_m^{\pi_m}-w_m^{2m-1-\pi_m}),
\end{equation}
the summation is over bijections $\pi : [1,m]\to [0,m-1]$. The coefficient at $\prod x_i^{2m-2}$ in $F$ is obtained from this expression dividing by $\prod(1-w_i)$.

In the general case the authors cannot prove that this coefficient is nonzero, but in case $n=p$ is a prime we obtain an alternative proof of Theorem~\ref{p-part} as follows. Let us divide (\ref{permanent}) by $\prod_{i=1}^n (1 - w_i)$ to obtain 
\begin{equation}
\label{permanent2}
\sum_\pi (w_1^{\pi_1}+\dots + w_1^{2m-2-\pi_1})\cdot \dots \cdot (w_m^{\pi_m}+\dots+w_m^{2m-2-\pi_m}),
\end{equation}
and apply the following lemma.

\begin{lem}
Let $f$ be a polynomial with integer coefficients, $p$ be a prime, $w$ be the $p$-th root of unity. If $f(w)=0$, then $f(1)$ is divisible by $p$.
\end{lem}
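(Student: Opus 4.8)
The plan is to work inside the $p$-th cyclotomic field. Here $w$ is understood to be a \emph{primitive} $p$-th root of unity, i.e.\ $w = \cos\frac{2\pi}{p} + i\sin\frac{2\pi}{p}$ as in the construction above (if $w=1$ the assertion is trivial, since then $f(1)=f(w)=0$). The object controlling everything is the cyclotomic polynomial $\Phi_p(x) = 1 + x + \cdots + x^{p-1} = \frac{x^p-1}{x-1}$, which I claim is the minimal polynomial of $w$ over $\mathbb{Q}$; the only value of it I will actually need is $\Phi_p(1) = p$.

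First I would establish that $\Phi_p$ is irreducible over $\mathbb{Q}$, which identifies it as the minimal polynomial of $w$. The standard route is to apply the Eisenstein criterion at $p$ to the shifted polynomial $\Phi_p(x+1) = \frac{(x+1)^p - 1}{x} = \sum_{k=1}^{p}\binom{p}{k}x^{k-1}$: every coefficient except the leading one is divisible by $p$ (because $p \mid \binom{p}{k}$ for $1 \le k \le p-1$), while the constant term $\binom{p}{1} = p$ is not divisible by $p^2$. Hence $\Phi_p(x+1)$, and therefore $\Phi_p(x)$, is irreducible.

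Next, since $f(w)=0$ and $\Phi_p$ is the minimal polynomial of $w$, the polynomial $\Phi_p$ divides $f$ in $\mathbb{Q}[x]$. As $\Phi_p$ is monic with integer coefficients, Gauss's lemma upgrades this to a factorization $f(x) = \Phi_p(x)\,g(x)$ with $g \in \mathbb{Z}[x]$; equivalently, one divides $f$ by the monic $\Phi_p$ directly in $\mathbb{Z}[x]$ to get $f = \Phi_p q + r$ with $\deg r < p-1$, and then $r(w)=0$ together with minimality forces $r=0$. Evaluating at $x=1$ now gives $f(1) = \Phi_p(1)\,g(1) = p\,g(1)$, and since $g(1)\in\mathbb{Z}$ we conclude $p \mid f(1)$, as required.

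As for the main obstacle: the argument is routine once irreducibility is in hand, so the only substantive input is that $1, w, \ldots, w^{p-2}$ are linearly independent over $\mathbb{Q}$ — this is precisely what excludes lower-degree relations at $w$ and forces the clean conclusion. A self-contained variant avoiding explicit mention of Gauss's lemma is to reduce exponents modulo $p$: writing $f(w) = \sum_{r=0}^{p-1} b_r w^r$ with $b_r \in \mathbb{Z}$ (so that $f(1) = \sum_r b_r$), the space of $\mathbb{Q}$-linear relations among $1, w, \ldots, w^{p-1}$ is one-dimensional, spanned by $1 + w + \cdots + w^{p-1} = 0$; hence $f(w)=0$ forces $b_0 = b_1 = \cdots = b_{p-1}$, and therefore $f(1) = p\,b_0 \equiv 0 \pmod p$.
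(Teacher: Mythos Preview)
Your argument is correct and follows essentially the same route as the paper: identify the cyclotomic polynomial $\Phi_p$ as the minimal polynomial of $w$, factor $f = \Phi_p \cdot g$ with $g \in \mathbb{Z}[x]$, and evaluate at $x=1$ to get $f(1) = p\,g(1)$. You simply supply details (Eisenstein for irreducibility, Gauss's lemma for integrality of $g$) that the paper takes for granted, and your closing variant via the one-dimensional relation space is a pleasant alternative but not needed.
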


\begin{proof}
The minimal polynomial of $w$ is $g = 1+w+...+w^{p-1}$. Hence $f=hg$, where $h$ is a polynomial with integer coefficients, and therefore $f(1) = h(1) g(1) = h(1)p$.
\end{proof}

Now we note that after replacing all $w_i$ (by definition they are $w^{d_i}$) in (\ref{permanent2}) by $1$, we obtain 
$$
\sum_\pi (2m-2\pi_1 - 1)\cdot \dots \cdot (2m-2\pi_m-1),
$$
Every summand equals $(2m-1)!!$, and there are $m!$ summands, hence the total value equals $m!(2m-1)!!$ and is not divisible by $p$.

\section{Some remarks on the sum-sets}
\label{cau-dav-sec}

The technique of the previous section allows to give a short proof of a particular case of the Cauchy--Davenport type theorem from~\cite{bole1996,ekp2003} (see also~\cite{cau1813,dav1935,eliker1998,alon1999,kar2004}). The Cauchy--Davenport type theorems estimate the cardinality of 
$$
A+B = \{a + b : a\in A,\ b\in B\},
$$
where $A$ and $B$ are finite subsets of an Abelian group. The technique we are going to use was already used in~\cite{kar2004} in application to the sum-sets problem (and the restricted sum-sets problem).

\begin{defn}
Define $\beta_p(r,s)$ to be smallest integer $n$ such that $p\mid \binom{n}{k}$ for all $k$ in the range
$$
n-r < k < s.
$$
\end{defn}

\begin{thm}
\label{cau-dav-pk}
Let $A, B\subset \mathbb Z/(p^\alpha)$, where $p$ is prime. Then 
$$
|A+B|\ge \beta_p(|A|, |B|).
$$
\end{thm}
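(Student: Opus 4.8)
The plan is to argue by contradiction and to reduce everything to a statement about binomial coefficients, via the cyclotomic embedding of Section~\ref{zn-pairs-sec}. Write $r=|A|$, $s=|B|$ and suppose $t:=|A+B|<\beta_p(r,s)$. Since $\beta_p(r,s)$ is the \emph{smallest} $n$ enjoying its defining property, the value $t$ itself must fail that property: there is an index $k$ with $t-r<k<s$ and $p\nmid\binom{t}{k}$. Putting $k'=k$ and $k''=t-k$ we then have $0\le k''\le r-1$, $0\le k'\le s-1$, $k'+k''=t$ and $p\nmid\binom{t}{k'}$. This $k$ is exactly the quantity that will later have to be matched against a coefficient of an explicit polynomial.

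Next I would embed $\mathbb Z/(p^\alpha)$ into the $p^\alpha$-th roots of unity by $a\mapsto\zeta^a$, $\zeta=e^{2\pi i/p^\alpha}$, and work in $\mathbb Z[\zeta]$, as in Section~\ref{zn-pairs-sec}. Addition in the group becomes multiplication of roots of unity, so with $A^\ast=\{\zeta^a:a\in A\}$ and $B^\ast=\{\zeta^b:b\in B\}$ the product set $A^\ast B^\ast=\{\zeta^c:c\in A+B\}$ has exactly $t$ elements. Choosing $A_0\subseteq A$, $B_0\subseteq B$ with $|A_0|=k''+1$, $|B_0|=k'+1$, I would form the polynomial
\[
g(x,y)=\prod_{\pi\in A_0^\ast B_0^\ast}(xy-\pi),
\]
which vanishes on $A_0^\ast\times B_0^\ast$ because $uv\in A_0^\ast B_0^\ast$ for every $u\in A_0^\ast$, $v\in B_0^\ast$. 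Its coefficients are, up to sign, the elementary symmetric functions $e_j(A_0^\ast B_0^\ast)$ of the participating roots of unity.

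The crucial point is that these coefficients degenerate to binomial coefficients under the reduction $\zeta\mapsto 1$. Indeed $\zeta\equiv 1$ modulo the prime $(1-\zeta)$ lying over $p$, and $\mathbb Z[\zeta]/(1-\zeta)=\Fp$; under this reduction each $\pi=\zeta^c$ becomes $1$, so $e_j(A_0^\ast B_0^\ast)$ reduces to $e_j(1,\dots,1)=\binom{t}{j}$. This is precisely the mechanism behind the Lemma of Section~\ref{zn-pairs-sec}: a cyclotomic integer that vanishes reduces to a multiple of $p$. Thus the coefficient of $g$ attached to $k$ reduces to $\pm\binom{t}{k}$, a unit in $\Fp$ by our choice of $k$, hence nonzero. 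Feeding a nonzero coefficient into Theorem~\ref{cn-prod} for the sets $A_0^\ast,B_0^\ast$ would produce $(u,v)\in A_0^\ast\times B_0^\ast$ with $g(u,v)\ne 0$, i.e.\ $uv\notin A_0^\ast B_0^\ast\subseteq A^\ast B^\ast$; but $uv=\zeta^{a+b}$ with $a+b\in A+B$, a contradiction, yielding $t\ge\beta_p(r,s)$.

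The hard part is that $g$ has joint degree $2|A_0^\ast B_0^\ast|$, whereas Theorem~\ref{cn-prod} only controls polynomials of degree at most $k''+k'=t$; the multiplicative embedding doubles the degree and a naive application fails. Overcoming this is exactly where the arithmetic measured by $\beta_p$ enters, and I expect it to be the main obstacle. My remedy is to replace the crude degree count by the $(1-\zeta)$-adic filtration — equivalently the $u$-adic filtration in $\Fp[u]/(u^{p^\alpha})$ with $\omega=1+u$, so that $\omega^a=(1+u)^a$ produces the binomials $\binom{a}{k}$ directly — where the order of vanishing is \emph{additive} under multiplication. In these terms the assertion turns into a Cauchy--Davenport inequality for the valuation sets $V_A=\{\,v(z):z\in\mathrm{span}\{(1+u)^a:a\in A\}\setminus\{0\}\,\}$, which satisfy $|V_A|=r$, $|V_B|=s$ and $(V_A+V_B)\cap[0,p^\alpha)\subseteq V_{A+B}$, whence $|A+B|\ge|(V_A+V_B)\cap[0,p^\alpha)|$. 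The remaining, genuinely combinatorial task is to show $|(V_A+V_B)\cap[0,p^\alpha)|\ge\beta_p(r,s)$, reconciling the carry structure of binomial coefficients modulo $p$ (Lucas' theorem) with the defining property of $\beta_p$; this valuation bookkeeping, rather than the polynomial set-up, is where I expect the real work to lie.
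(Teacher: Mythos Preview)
Your setup coincides with the paper's up to the choice of the auxiliary polynomial, and that is exactly where the argument goes wrong. You take
\[
g(x,y)=\prod_{\pi\in A_0^\ast B_0^\ast}(xy-\pi),
\]
which, as you note yourself, has total degree $2|A_0^\ast B_0^\ast|$ and therefore cannot be fed into Theorem~\ref{cn-prod} with sets of sizes $k''+1$ and $k'+1$. Worse, $g$ is a polynomial in the single variable $xy$, so its only monomials are $x^jy^j$; there is no monomial $x^{k''}y^{k'}$ with $k''\ne k'$ to look at. And even if there were, the elementary symmetric functions you obtain reduce modulo $(1-\zeta)$ to $\binom{|A_0^\ast B_0^\ast|}{j}$, not $\binom{t}{j}$; the two coincide only if $A_0^\ast B_0^\ast$ already equals the full product set, which you cannot arrange while keeping $|A_0|,|B_0|$ small.

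The paper avoids all of this with one line: work with the full product set $C=A^\ast/B^\ast$ (so $|C|=n=t$) and take
\[
f(x,y)=\prod_{c\in C}(x-cy).
\]
This polynomial is homogeneous of degree $n$ (not $2n$), it still vanishes identically on $A^\ast\times B^\ast$ since $x/y\in C$ there, and its coefficient at $x^{k}y^{n-k}$ is $(-1)^{n-k}\sigma_{n-k}(C)$, which reduces to $(-1)^{n-k}\binom{n}{k}\not\equiv 0\pmod p$ exactly by your choice of $k$. Now Theorem~\ref{cn-prod} applies directly with subsets of $A^\ast,B^\ast$ of the right sizes, and the contradiction is immediate. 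The ``homogenization'' $xy-\pi\leadsto x-\pi y$ (after passing from $B$ to $-B$) is the entire missing idea; once you see it, no valuation bookkeeping is needed. Your proposed $(1-\zeta)$-adic or $u$-adic workaround points toward the Bollob\'as--Leader/Eliahou--Kervaire--Plaigne machinery, which does ultimately succeed, but it is a genuinely different and much longer argument, and in your sketch the decisive inequality $|(V_A+V_B)\cap[0,p^\alpha)|\ge\beta_p(r,s)$ is stated rather than proved.
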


\begin{proof}
The reasoning is essentially the same as in~\cite[Theorem~3, Lemma~5]{kar2004}.
Consider $\mathbb Z/(p^\alpha)$ as the multiplicative group of solutions of the equation $z^{p^\alpha}-1=0$ in $\mathbb C$. Put $|A|=r$ and $|B|=s$. By considering $-B$ instead of $B$, we pass to studying the set 
$$
C = \{a/b : a\in A, b\in B\}
$$
and prove that $|C| \ge \beta_p(r, s)$. Assume the contrary, $|C| = n < \beta_p(r, s)$, then the polynomial
$$
f(x,y) = \prod_{c\in C} (x - cy)
$$
is zero on $A\times B$. By the definition of $\beta_p(r, s)$ there exists $n-r < k < s$ such that 
$$
\binom{n}{k} \neq 0\mod p.
$$
Consider the coefficient in $f(x, y)$ at the monomial $x^ky^{n-k}$, which is
$$
d = (-1)^{n-k} \sigma_{n-k}(c_1, \ldots, c_n),
$$
where $\sigma$ is the elementary symmetric function, $C=\{c_1,\ldots, c_n\}$. By the Combinatorial Nullstellensatz (Theorem~\ref{cn-prod}) this coefficient $d$ should be zero.

Let us write every $c_i$ as a power of $z$, the primitive $p^\alpha$-th root of unity. Then $d$ becomes a polynomial of $z$ with integer coefficients, denote it by $d(z)$. Note that $d(z)=0$, and therefore $d(z)$ is divisible by the minimal polynomial of $z$
$$
r(z) = \frac{z^{p^\alpha}-1}{z^{p^{\alpha-1}}-1} = \sum_{i=0}^{p-1} z^{ip^{\alpha-1}}.
$$ 
Substituting into the equality for polynomials with integer coefficients
$$
d(z) = r(z)q(z)
$$
the value $z=1$, we obtain
$$
(-1)^{n-k}\binom{n}{k} = (-1)^{n-k} \sigma_b(1, \ldots, 1) = r(1)q(1) \equiv 0 \mod p,
$$
which is a contradiction with $\binom{n}{k}\neq 0\mod p$.
\end{proof}

\begin{rem}
The Combinatorial Nullstellensatz can also prove this theorem (with the same estimate $\beta_p(\cdot, \cdot)$) for groups of type $(\mathbb Z_p)^k$. These groups are additive groups of fields and the proof is even simpler compared to the above reasoning, the relevant coefficients of $\prod_{c\in C} (x + y - c)$ are already \emph{equal} to $\binom{n}{k}$. A much stronger result is proved in~\cite{bole1996,ekp2003}: Theorem~\ref{cau-dav-pk} holds for any finite Abelian $p$-group.
\end{rem}

\end{document}